\newtheorem{thm}{Theorem}
\newtheorem{defin}{Definition}
\newcommand{\N}{\mathbb{N}}
\newcommand{\GG}{\mathbb{G}}
\newcommand{\EE}{\mathbb{E}}
\begin{document}
\title{Comparison of approximation algorithms for the travelling salesperson problem on semimetric graphs}
\author{Mateusz Krukowski, Filip Turobo\'s}
\affil{Institute of Mathematics, \L\'od\'z University of Technology, \\ W\'ol\-cza\'n\-ska 215, \
90-924 \ \L\'od\'z, \ Poland \\ \vspace{0.3cm} e-mail: mateusz.krukowski@p.lodz.pl}
\maketitle

\begin{abstract}
The aim of the paper is to compare different approximation algorithms for the travelling salesperson problem. We pick the most popular and widespread  methods known in the literature and contrast them with a novel approach (the polygonal Christofides algorithm) described in our previous work. The paper contains a brief summary of theory behind the algorithms and culminates in a series of numerical simulations (or ``experiments''), whose purpose is to determine ``the best'' approximation algorithm for the travelling salesperson problem on complete, weighted graphs.     
\end{abstract}

\smallskip
\noindent 
\textbf{Keywords : } semimetric spaces, travelling salesperson problem, minimal spanning tree, approximate solutions, polygonal Christofides algorithm
\vspace{0.2cm}
\\
\textbf{Mathematics Subject Classification (2020):} 54E25, 90C27, 68R10, 05C45

\section{Introduction}

The travelling salesperson problem (or the TSP for short) hardly needs any introduction. It is a sure bet that every mathematician at one point or another has heard something along those lines: ``Suppose we have $n$ cities and you are a salesperson delivering a product to each of those cities. You cannot travel the same road between two cities twice, and (obviously) you have to supply all the customers with the desired products (or else you get fired). At the end of the journey you have to come back to where you started (beacuse the boss is waiting for your report). In what order will you visit all the cities?''

Anyone who has attended at least a couple of lectures on graph theory will immediately recognize that the story is asking for a Hamiltonian path of minimal weight in a given graph $G$ (representing the cities and roads between them). However, the story as presented above demands that we deal with a few technical caveats. First off, we assume that $G$ is a complete and weighted graph. Intuitively, this means that between any two cities there is a direct road connecting them. 

Next, a couple of words regarding the weights of the edges (roads between cities) are in order. The first thought that springs to mind is that the ``weight of the road'' should be its length (in km, miles or whatever metric system is used in a given country). Under the assumption that the roads are as straight as a ruler, such a choice of weights leads to the so-called \textit{metric TSP}, where 
\begin{center}
    distance between city A and city B + distance between city B and city C \\
    $\geqslant$ distance between city A and city C.
\end{center}

However, anyone who owns a car knows fairly well that reality does not always pan out that conveniently. For instance, we may imagine a highway which goes straight from city A to city B and then to city C and that a direct road from city A to city C leads through hills and valleys and is filled with numerous turns. It is definitely conceivable that in such a scenario we actually have 
\begin{center}
    distance between city A and city B + distance between city B and city C \\
    $<$ distance between city A and city C.
\end{center}

If the Reader consider this example to be too far-fetched, let us suggest another reasoning.\footnote{For this argument to work we may even assume that the salesperson lives in a world where all roads are straight lines (or rather intervals).} As we all know from personal experience, time is a much more valuable resource in our lives than the number of kilometers (miles etc.) we have travelled. Hence, we should not be surprised that the salesperson would rather take the ring road, travel a longer distance but save precious time rather than get stuck on a shorter road in traffic jams at every junction. This means that if the weight of the edge/road is the time it takes to travel that distance, the TSP may easily be nonmetric. 

The discussion we carried out above supports the claim that the nonmetric instances of the travelling salesperson problem should not be discarded as ``uninteresting''. As we have argued, these instances model the scenarios we encounter in our daily lives and as such constitute a sufficient motivation for further research in this area.

Having justified why we feel that the nonmetric TSP is an essential part of mathematical research we proceed with laying out the general schedule of the paper. We present this brief overview to facilitate the comprehension of the ``big picture'' before we dive into technical details.

Section \ref{section:framework} introduces preliminary notions in graph theory and semimetric spaces, which are indispensible for further reading. Additionally, the section establishes the notation used throughout the paper. Section \ref{section:approximatesolutions} opens with an explanation of why the travelling salesperson problem is not as easy as ``simply checking all the Hamiltonian cycles'' on a graph. Although such an approach seems perfectly valid from a theoretical standpoint, the computational complexity of the TSP is so immense (even for relatively small graphs) that no computer will ever be able to ``brute-force this problem'' in reasonable time. Section \ref{section:approximatesolutions} goes on to describe the following methods, which return approximate solutions to the TSP:
\begin{description}
    \item[$\bullet$] double minimal spanning tree algorithm (or DMST algorithm for short),
    \item[$\bullet$] (refined) Andreae-Bandelt algorithm (or (r)AB algorithm for short),
    \item[$\bullet$] path matching Christofides algorithm (or PMCh algorithm for short),
    \item[$\bullet$] polygonal Christofides algorithm (or PCh algorithm for short), which is a novel method constructed in our previous paper.\footnote{See \cite{Krukowski2021}.}
\end{description}

\noindent
To every method we attach a pseudocode, so everyone (if they so please) can implement these algorithms in a programming language of their own choosing.

Section \ref{section:numericalcomparison} is where we put our new PCh algorithm to the test and juxtapose it with other methods generating approximate solutions to the TSP. We verify that the PCh method performs better (returns Hamiltonian cycles with lower total weight) than the rest of the algorithms on a series of random graphs of different sizes. We also test numerically that the execution time of the PCh algorithm does not deviate much from those of other algorithms (bar the DMST method, which is significantly faster). Naturally, the paper concludes with the bibliography.

\section{Framework of semimetric spaces and graph theory}
\label{section:framework}

In the introductory section we laid down (in rather broad terms) the travelling salesperson problem and argued that its nonmetric instances are equally important as their metric counterparts. It is high time we recalled the preliminaries of semimetric spaces in greater detail. 

\begin{defin}
For a nonempty, finite set $X$, a function $d:X\times X \longrightarrow [0,+\infty)$ is called a semimetric if it satisfies the following two conditions:
\begin{description}
	\item[$\bullet$] $\forall_{x,y\in X}\ d(x,y) = 0$ if and only if $x=y$, and
	\item[$\bullet$] $\forall_{x,y\in X}\ d(x,y) = d(y,x)$.
\end{description}

\noindent
The pair $(X,d)$ is called a semimetric space.
\label{semimetricdefinition}
\end{defin}		

Let us remark that we insist on set $X$ being finite simply because our model example and primary motivation is the travelling salesperson problem, which makes no sense on infinite graphs. Hence, we refrain from unnecessary and excessive generality and do not consider infinite semimetric spaces.

Next, we define $\beta-$metric spaces and $\gamma-$polygon spaces:\footnote{Both $\beta-$metric and $\gamma-$polygon spaces are well-established  in the literature: \cite{An2015,Andreae1995,Bakthin1989,Bandelt1991,Bourbaki1966,Chrzaszcz2018,Chrzaszcz2018.2,Dung2016,Fagin2003,Jachymski2020,Paluszynski2009,Schroeder2006,Suzuki2017,Wilson1931,Xia2009} serve just as a couple of examples.}

\begin{defin}
A semimetric space $(X,d)$ is said to be:
\begin{description}
    \item[$\bullet$] $\beta-$metric space if $\beta\geqslant 1$ is the smallest number such that the semimetric $d$ satisfies the $\beta-$triangle inequality:
    \begin{gather}
    \forall_{x,y,z\in X}\ d(x,z)\leqslant \beta ( d(x,y) + d(y,z) ),
    \label{betainequality}
    \end{gather}
    
    \item[$\bullet$] $\gamma-$polygon space if $\gamma\geqslant 1$ is the smallest number such that the semimetric $d$ satisfies the $\gamma-$polygon inequality:
    \begin{gather}
    \forall_{n\in\N} \ \forall_{x_1,\dots,x_n\in X}\ d(x_1,x_n)\leqslant \gamma\cdot \sum_{k=1}^{n-1} d(x_k,x_{k+1}).
    \label{polygonalinequality}
    \end{gather}
\end{description}
\end{defin}

It is a relatively easy observation\footnote{As far as we know it first appeared in \cite{Chrzaszcz2018.2}.} that every semimetric space admits both $\beta$-metric and $\gamma$-polygon structure. Naturally, $\beta\leqslant \gamma$ but the two constants may differ in general.\footnote{For an example see \cite{Krukowski2021}.} 

Let us proceed with a brief summary of graph theory notions which are necessary for further reading. An additional advantage of our concise review is that we lay down the notational conventions used in the sequel. There could be no other starting point than the definition of a graph itself:\footnote{The definition of a graph is based on \cite[p. 2]{Diestel2000}, whereas the definition of a weighted graph was taken from \cite[p. 463]{Fletcher1991}.}

\begin{defin}
A pair $G := (V,E)$ is called a graph if $V$ is a nonempty, finite set and 
$$E \subset \bigg\{\{x,y\} \ : \ x,y\in V,\ x\neq y \bigg\}.$$ 

\noindent
The elements of $V$ and $E$ are called vertices (or nodes) and edges, respectively.

A weighted graph is a pair $(G,\omega)$, where $G = (V,E)$ is a graph, $E\neq \emptyset$ and $\omega : E \longrightarrow (0,+\infty)$ is a positive function, called the weight.
\end{defin}

Let us pause for a moment and discuss this definition. First off, we will often utter the phrase ``\textit{Let $G$ be a graph}'' and then refer to the vertex and edge set of $G$ as $V(G)$ and $E(G)$, respectively.\footnote{An identical convention can be found in \cite{Diestel2000}.} Furthermore, those acquainted with the graph terminology will surely recognize our graphs to be \textit{undirected} and \textit{simple}. ``\textit{Undirectedness}'' of a graph means that the edges are not oriented, i.e., every edge is a set $\{x,y\}$ rather than an ordered pair $(x,y).$ On the other hand, ``\textit{simplicity}'' means that the graph contains no ``\textit{loops}'' (i.e. edges of the form $\{x,x\}$) or \textit{multiedges} (i.e. $E$ is a set and not a multiset). However, the need for \textit{multiedges} and \textit{multigraphs} will arise in Section \ref{section:approximatesolutions}, so we take the liberty of including the formal definition of these objects here:

\begin{defin}
A pair $\GG :=(V,\EE)$ is called a multigraph if $V$ is a nonempty, finite set and
$$\EE\subset \bigg\{ (k,\{x,y\})\ : \ x,y\in V,\ x\neq y,\ k\in\N_0 \bigg\}.$$

The elements of $\EE$ are called multiedges (while the elements of $V$ are still called vertices or nodes). A weighted multigraph is a pair $(\GG,\omega)$, where $\GG= (V,\EE)$ is a multigraph, $\EE\neq \emptyset$ and $\omega:\EE\longrightarrow (0,+\infty)$ is a positive function, called the weight.
\end{defin}

We proceed with a series of familiar graph theory concepts:\footnote{See \cite{Anderson2018,Bondy2008,Diestel2000}.}

\begin{defin}\label{def:subgraph etc}
Let $G$ be a graph.
\begin{description}
    \item[$\bullet$] Graph $F$ is called a subgraph of $G$ if $V(F)\subset V(G)$ and $E(F)\subset E(G)$. We denote this situation by writing $F\subset G$.
    
    \item[$\bullet$] Graph $P$ is called a path if $V(P)$ can be arranged in a sequence so that two vertices are adjacent if and only if they are consecutive in this sequence.
    
    \item[$\bullet$] Graph $C$ is called a cycle if the removal of any edge in $C$ turns it into a path. If $G$ does not contain any cycle as a subgraph, then it is called an acyclic graph.
    
    \item[$\bullet$] Subgraph $H\subset G$ is called a Hamiltonian cycle if it is a cycle visiting each vertex of $G$ exactly once. 
    
    \item[$\bullet$] Graph $G$ is said to be connected if for any pair of vertices $x,y\in V(G)$ there exists a path $P\subset G$ such that $x,y\in V(P)$.

    \item[$\bullet$] An acyclic, connected graph is called a tree.
\end{description}
\end{defin}

The notion of a subgraph introduced in Definition \ref{def:subgraph etc} is inherently independent of the weight function (if such exists) on graph $G$. However, if $\omega$ is a weight on $G$ and $F$ is a subgraph of $G$, then $(F,\omega|_{E(F)})$ is a weighted graph and $\omega|_{E(F)}$ is called an \textit{induced weight}. Furthermore, it is often convenient to speak of a weight of a subgraph (or the whole graph itself), which we define as 
\begin{equation*}
\omega(F):=\sum_{e\in E(F)} \omega(e).
\end{equation*}

It is hard to deny that this definition is a slight abuse of notation -- after all, we use the same symbol ``$\omega$'' to weigh both edges and (sub)graphs. Formally this is a mistake since edges and (sub)graphs are objects from different ``categories'' -- an edge is an unordered pair of elements while a (sub)graph is a pair of vertex set and an edge set. However, we believe that such a small notational inconsistency should not lead to any kind of misapprehension.

We are now in position to formulate the \textit{travelling salesperson problem} (or TSP for short) in a formal manner:\footnote{It should be emphasized that there are multiple other ways to define this problem, for example as an integer programming problem with constraints on vertex degrees -- see \cite{Danzig1959,Laporte1992,Matai2010,Miller1960}.}\\
\vspace{0.01cm}\\
\fbox{%
\parbox{0.95\textwidth}{\large
\textit{For a complete, weighted graph $(K_n,\omega)$ find a Hamiltonian cycle with minimal weight.}
}
}
\\\vspace{0.01cm}\\

As we remarked earlier, TSP is one of the most famous mathematical puzzles\footnote{Timothy Lanzone even directed a movie ``\textit{Travelling Salesman}'', which premiered at the International House in Philadelphia on June 16, 2012. The thriller won multiple awards at Silicon Valley Film Festival and New York City International Film Festival the same year.} and a detailed account of its history is far beyond the scope of this paper. Instead, we recommend just a handful of sources -- for an indepth discussion on both history and possible attempts at solving this problem see: \cite{Applegate2006,Cook2012,Fleischmann1988,Fonlupt1992,Gutin2001,Laporte1992,Reinelt1994,Snyder2019,Van2020}.

To conclude this section let us bridge the gap between graph theory and semimetric spaces. Given a complete, weighted graph $G := (K_n, \omega)$ it is most natural to impose the semimetric structure on $V(K_n)$ in the following way:\footnote{Clearly, the function defined in \eqref{semimetriconwholegraph} is symmetric (due to the ``undirectedness'' of the graph $G$) and equals zero if and only if $x=y$ for all $x,y\in V(G)$.}
\begin{equation}\label{semimetriconwholegraph}
d_G(x,y):=\begin{cases} 
\omega(\{x,y\}),& \text{ if }x\neq y,\\
0,& \text{ otherwise.}
\end{cases}
\end{equation}

Due to this ``graph-semimetric marriage'' we are able to introduce the following convenient definition:

\begin{defin}
A complete, weighted graph $G:= (K_n,\omega)$ is called
\begin{description}
    \item[$\bullet$] a metric graph if $d_G$ is a metric on $V(K_n)$,
    \item[$\bullet$] a $\beta-$metric graph if $d_G$ satisfies \eqref{betainequality},
    \item[$\bullet$] a $\gamma-$metric graph if $d_G$ satisfies \eqref{polygonalinequality}.
\end{description}
\end{defin}

\section{Approximate solutions to the TSP on semimetric graphs}
\label{section:approximatesolutions}

A brute-force solution to the travelling salesperson problem is to go over all possible Hamiltonian cycles in a given graph, calculate the total weight of each of them and choose one with the smallest value.\footnote{Note that we do not say \textit{the} one with the smallest value, since there might be multiple Hamiltonian cycles with equal, minimal total weight.} Each Hamiltonian cycle can be thought of as a permutation of nodes -- for instance, the Hamiltonian cycle $(x_1,x_2,x_3,\ldots, x_{n-1},x_n)$ in $K_n$ corresponds to the permutation $(1,2,3,\ldots,n-1,n)$ whereas $(x_n,x_2,x_3,\ldots,x_{n-1},x_1)$ corresponds to $(n,2,3,\ldots,n-1,1).$ However, the permutation representation of a Hamiltonian cycle is not unique, since $(1,2,3,\ldots,n-1,n),\ (2,3,\ldots,n-1,n,1),\ (3,\ldots,n-1,n,1,2),\ \ldots,\ (n-1,n,1,2,3,\ldots,n-2)$ and $(n,1,2,3,\ldots,n-1)$ all represent the same Hamiltonian cycle, namely $(x_1,x_2,x_3,\ldots,x_{n-1},x_n).$ Furthermore, every ``\textit{order reversal}'' also represents the same cycle, so $(n,n-1,\ldots,3,2,1),\ (1,n,n-1,\ldots,3,2),\ (1,2,n,n-1,\ldots,3),\ \ldots$ and $(n-1,\ldots,3,2,1,n)$ also correspond to $(x_1,x_2,x_3,\ldots,x_{n-1},x_n).$ In summary, there are $2n$ permutations of the set $\{1,2,\ldots,n\},$ which correspond to a single Hamiltonian cycle. This means that in order to find a ``true'' (rather than an approximate) solution to the travelling salesperson problem, one needs to examine $\frac{1}{2}\cdot (n-1)!$ permutations.\footnote{There are a number of algorithms for generating all the permutations of a given set (for a thorough exposition see \cite{Sedgewick1977}), with one of the most popular being the Heap's algorithm (see \cite{Heap1963}).} 

To illustrate the computational complexity of the TSP let us imagine a complete, weighted graph $K_{61}.$ Due to the analysis above, there are $\frac{1}{2}\cdot 60!$ Hamiltonian cycles on this graph, which is more than the estimated number of particles in the observable universe! This means that even for relatively small graphs ($61$ nodes are certainly within human comprehension) the brute-force approach of checking every Hamiltonian cycle and looking for the one with the minimal total weight is extremely time-consuming. 

One possibility of speeding up the search for the ``ideal'' solution to the travelling salesperson problem is to use the Held-Karp algorithm.\footnote{See \cite{Held1971}.} This method relies on the function $HK$ defined recursively for every node $x \in \{x_2,\ldots,x_n\}$ and every subset of nodes $S\subset \{x_2,\ldots,x_n\}\backslash \{x\}$ as 
$$HK(x, S) := \min_{y\in S}\ \bigg(HK(y, S\backslash\{y\}) + d_G(y,x)\bigg)\hspace{0.4cm}\text{and}\hspace{0.4cm} HK(x,\emptyset) := d_G(x_1,x).$$

\noindent
Intuitively, the value $HK(x,S)$ is the total weight of the path between node $x_1$ and node $x,$ which goes through the vertices of $S$ in some order (the path does not use any other nodes). With this interpretation in mind, solving the TSP boils down to calculating all the values $HK(x_2,\{x_2,\ldots,x_n\}\backslash\{x_2\}),$ $HK(x_3,\{x_2,\ldots,x_n\}\backslash\{x_3\}),$ $\ldots,$ $HK(x_n,\{x_2,\ldots,x_n\}\backslash\{x_n\})$ and choosing the minimal value. Tracing back all the choices of the algorithm we are able to reconstruct the Hamiltonian cycle with the minimal weight (i.e., an ideal solution to the TSP). 

Although the Held-Karp algorithm is a reasonable improvement with respect to the brute-force approach it is still characterized by the exponential time complexity. This renders the method impractical for graphs of larger size. Hence the need for algorithms that solve the TSP considerably faster even at the price of returning approximate solutions rather than the ideal ones. The current section aims to summarize these approximation methods.

\subsection{Double minimal spanning tree algorithm}

As the name itself suggests, the first method we discuss hinges upon the notion of a (minimal) spanning tree:

\begin{defin}
\label{def:paths trees spanning trees}
Let $(K_n,\omega)$ be a complete, weighted graph. A tree $T$, which satisfies $V(T) = V(K_n)$ is called a spanning tree. Furthermore, if $\mathcal{S}$ denotes the set of all spanning trees of $(K_n,\omega)$, then a tree $T$ which satisfies
$$\omega(T) = \min_{T' \in \mathcal{S}} \omega(T').$$

\noindent
is called a minimal spanning tree. 
\end{defin}

We usually say that $T$ is ``\textit{a}'' rather than ``\textit{the}'' minimal spanning tree since for a given graph there might be more than one such tree. The most obvious example for such a situation is a complete graph with every edge of equal weight. 

Prior to elaborating on the the dobule minimal spanning tree method itself, let us recall the concepts of a walk and a tree traversal:\footnote{See \cite[p.10]{Diestel2000}.}

\begin{defin}
\label{def:walk and tree traversal}
Let $G$ be a graph (not necessarily complete or weighted). A $j$-element sequence of vertices $(x_1,\dots,x_j)$ such that $\{x_i,x_{i+1}\}\in E(G)$ for every $i<j$ is called a walk on graph. If $T$ is a tree on $K_n$, then any walk on $T$ which visits every edge exactly twice is called a tree traversal.
\end{defin}

We are now fully-equipped to formulate the general framework of the \textit{double minimal spanning tree algorithm} (or the \textit{DMST algorithm} for short):\\

\noindent
\textbf{Input:} A complete, weighted graph $(K_n,\omega).$
\begin{description}
    \item[\textbf{Step 1.}] Find a minimal spanning tree $T$ of the graph. 
    \item[\textbf{Step 2.}] Via a depth-first search (or DFS for short) on $T$ construct a tree traversal (which depends on the root of the algorithm). 
    \item[\textbf{Step 3.}] Perform a shortcutting procedure on the tree traversal (from the previous step) to obtain a Hamiltonian cycle $H_{DMST}$. 
\end{description} 

The first two steps are widely discussed in numerous sources.\footnote{See \cite[Chapter 3.10]{Deo1974} or \cite[Chapter 12]{Papadimitriou1998} for the algorithms generating a minimal spanning tree, and \cite[Chapter 22.3]{Cormen2009} for the DFS algorithm.} Hence, we restrict ourselves to presenting an overview of the the shortcutting procedure. 

Given a minimal spanning tree $T$ in a complete, weighted graph $(K_n,\omega)$ we choose an arbitrary vertex $x_1\in V(T),$ which we refer to as the \textit{root of the tree}. Performing a DFS yields a tree traversal $(x_1,\ldots, x_{2n-1}).$ Next, we define $(y_1,\dots,y_n,y_{n+1})$ to be a sequence obtained from the traversal $(x_1,\dots,x_{2n-1})$ by ``\textit{shortcutting}'', i.e.:
\begin{equation}\label{eq:definition_of_yi}
    y_i:=\begin{cases}
    x_1, & i=1 \mbox{ or } i=(n+1), \\
    x_{m(i)}, & 1< i <n+1,
    \end{cases}
\end{equation}

\noindent
where 
$$m(i):=\min\bigg\{j: i\leqslant j \leqslant 2n-1 \ \text{and} \ x_j \not\in \{x_1,\dots,x_{j-1} \} \bigg\}.$$

\noindent
Although the definition (\ref{eq:definition_of_yi}) seems rather daunting, there is in fact a simple way to obtain $(y_1,\dots,y_n,y_{n+1})$ from $(x_1,\ldots,x_{2n-1})$ -- we go through the elements of the tree traversal one by one and cross out every ``repetition'' (an element that we have seen earlier in the sequence) except for $x_{2n-1},$ which is the same as $x_1$. 

Shortcutting procedure results in creation of a sequence $(y_i)_{i=1}^{n+1},$ where every vertex (except for the root $x_1$) appears precisely once (after all, we did cross out all repetitions other than $x_1=x_{2n-1}$). This means that the sequence is in fact a Hamiltionian cycle on $K_n$, which we denote by $H_{DMST}$ and refer to as the \textit{DMST Hamiltonian cycle}. We should, however, bear in mind that there might be multiple DMST Hamiltonian cycles on a given graph, which depend on the choice of both the minimal spanning tree $T$ and the root of the algorithm $x_1.$

If $H_{ideal}$ denotes the ideal (i.e., not approximate) solution to the TSP, then every minimal spanning tree $T$ satisfies the inequality:
\begin{equation}\label{eq:comparing optimal and MST}
\omega(T)\leqslant \omega(H_{ideal}).
\end{equation}

\noindent
The proof of this fact can be shortened to a simple observation that removing a single edge from $H_{ideal}$ leaves us with a spanning tree, whose cost is (by the very definition of the minimal spanning tree) bounded from below by $\omega(T)$. Inequality \eqref{eq:comparing optimal and MST} enables us to write down the following:

\begin{thm}\label{thm:on approximation of TSP in polygon graphs}
Let 
\begin{description}
    \item[$\bullet$] $(K_n,\omega)$ be a complete, $\gamma$-polygon graph,
    \item[$\bullet$] $T$ be its minimal spanning tree,
    \item[$\bullet$] $x_1$ be any node,
    \item[$\bullet$] $H_{DMST}$ be a Hamiltonian cycle (corresponding to tree $T$ and root $x_1$) constructed by the double minimal spanning tree method.
\end{description} 

\noindent
Then
\begin{equation}
\label{omegaHDMST}
\omega(H_{DMST}) \leqslant 2\gamma \omega(H_{ideal}).
\end{equation}
\end{thm}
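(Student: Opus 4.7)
The plan is to bound $\omega(H_{DMST})$ in two stages: first relate it to the weight of the underlying tree traversal via the $\gamma$-polygon inequality, and then use the already-stated bound \eqref{eq:comparing optimal and MST} to relate the tree weight to $\omega(H_{ideal})$.

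First I would record the elementary fact that, by Definition \ref{def:walk and tree traversal}, a tree traversal visits every edge of $T$ exactly twice, so if $(x_1,\dots,x_{2n-1})$ is the traversal produced by DFS, then
\begin{equation*}
\sum_{j=1}^{2n-2} d_G(x_j, x_{j+1}) \;=\; 2\,\omega(T).
\end{equation*}
This is independent of the semimetric structure and uses only that every edge in $T$ contributes exactly once to each of the two ``directions'' in which DFS traverses it.

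The central step is to analyse what shortcutting does. Consider the sequence $(y_1,\dots,y_{n+1})$ obtained from the traversal. By the construction recalled just before the theorem, between any two consecutive vertices $y_i$ and $y_{i+1}$ of the Hamiltonian cycle there is a (possibly trivial) subsequence of the original traversal of the form $(x_{m(i)}, x_{m(i)+1}, \dots, x_{m(i+1)})$ whose endpoints coincide with $y_i$ and $y_{i+1}$. The $\gamma$-polygon inequality \eqref{polygonalinequality} applied to each such subsequence yields
\begin{equation*}
d_G(y_i, y_{i+1}) \;\leqslant\; \gamma \sum_{j=m(i)}^{m(i+1)-1} d_G(x_j, x_{j+1}).
\end{equation*}
The main obstacle -- and the only point requiring genuine care -- is to verify that, as $i$ ranges from $1$ to $n$, the collection of sub-walks $(x_{m(i)},\dots,x_{m(i+1)})$ partitions the edges of the traversal, with no edge used twice and none omitted. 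This follows from the fact that the indices $m(i)$ are strictly increasing and that $m(1)=1$, $m(n+1)=2n-1$, which is exactly the content of the ``cross out every repetition, keep $x_{2n-1}=x_1$'' description of shortcutting given in the paper.

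Summing the polygon estimate over $i=1,\dots,n$ and invoking the partition property from the previous paragraph gives
\begin{equation*}
\omega(H_{DMST}) \;=\; \sum_{i=1}^{n} d_G(y_i, y_{i+1}) \;\leqslant\; \gamma \sum_{j=1}^{2n-2} d_G(x_j,x_{j+1}) \;=\; 2\gamma\,\omega(T).
\end{equation*}
Combining this with the already-established inequality \eqref{eq:comparing optimal and MST}, namely $\omega(T) \leqslant \omega(H_{ideal})$, yields the desired bound $\omega(H_{DMST}) \leqslant 2\gamma\,\omega(H_{ideal})$ and completes the proof.
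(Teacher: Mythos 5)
Your proposal is correct and follows essentially the same route as the paper's proof: apply the $\gamma$-polygon inequality to each segment $(x_{m(i)},\dots,x_{m(i+1)})$ of the traversal lying between consecutive shortcut vertices, sum over $i$ using the fact that these segments cover the traversal edges disjointly, bound the total by $2\gamma\,\omega(T)$, and finish with \eqref{eq:comparing optimal and MST}. The only (harmless) difference is that you make explicit the partition property and the identity $\sum_j d_G(x_j,x_{j+1})=2\,\omega(T)$, which the paper leaves implicit in its chain of inequalities.
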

\begin{proof}
Let $(x_k)_{k=1}^{2n-1}$ be a tree traversal for the minimal spanning tree $T$ and let $H_{DMST} = (y_k)_{k=1}^{n+1}.$ We have
\begin{equation}
\forall_{i=1,\ldots,n}\ d(y_{i},y_{i+1}) = d(x_{m(i)},x_{m(i+1)}) \stackrel{ \eqref{polygonalinequality}}{\leqslant} \gamma \cdot \sum_{j=m(i)}^{m(i+1)-1} d(x_{j},x_{j+1}).
\label{rpi_case_assumption}
\end{equation}

\noindent
Consequently, we obtain
{\small \begin{gather*}
\omega(H_{DMST}) = \sum_{i=1}^{n} d(y_i,y_{i+1}) \stackrel{\eqref{rpi_case_assumption}}{\leqslant} 
\gamma \cdot \sum_{i=1}^n \sum_{j=m(i)}^{m(i+1)-1} d(x_{j},x_{j+1})
\leqslant \gamma\sum_{k=1}^{2n-2} d(x_k,x_{k+1}) \leqslant 2\gamma\omega(T).
\end{gather*}}

\noindent
Due to the inequality \eqref{eq:comparing optimal and MST} we conclude the proof. 
\end{proof}

\subsection{Andreae-Bandelt algorithms}

The basic version of the \textit{Andreae-Bandelt algorithm} (or \textit{AB algorithm} for short)  was laid down in \cite{Andreae1995}. The crux of the algorithm is the fact that the cube $T^3$ of any tree $T$ contains a Hamiltonian cycle.\footnote{Let us recall that for a tree $T$, the cube $T^3$ is a graph on the same set of vertices, i.e. $V(T^3) = V(T)$ and such that $(x,y) \in E(T^3)$ if and only if there exists a path in $T$ between $x$ and $y,$ which comprises of at most $3$ edges.} The Andreae-Bandelt method (referred to as the $T^3-$algorithm by the authors themselves) is summarized by the following pseudocode:\footnote{See \cite{Andreae2001}.}\\

\noindent
\textbf{Input:} A tree $T$ with $|V(T)|\geqslant 3$ and an edge $\{x_1,x_2\} \in E(T).$
\begin{description}
    \item[\textbf{Step 1.}] Let $T_i$ be the connected component of $T-\{x_1,x_2\}$ containing the node $x_i$ for $i=1,2.$ 
    \item[\textbf{Step 2.}] If $|V(T_i)|\geqslant 2$ pick any $y_i\in E(T_i)$ such that $\{x_i,y_i\} \in E(T_i).$ If $|V(T_i)| = 1$ put $y_i := x_i.$
    \item[\textbf{Step 3.}] If $|V(T_i)|\geqslant 3,$ apply recursively the algorithm with $T_i$ and $\{x_i,y_i\}$ as the inputs, thus obtaining a Hamiltonian cycle $H_i$ on $T_i^3$ which contains the edge $\{x_i,y_i\}.$
    \item[\textbf{Step 4.}] If $|V(T_i)| \geqslant 3$ put $P_i := H_i - \{x_i,y_i\},$ otherwise put $P_i := T_i.$
    \item[\textbf{Step 5.}] Construct the Hamiltonian cycle by joining $P_1, P_2$ and the edges $\{x_1,x_2\},\ \{y_1,y_2\}.$
\end{description}

Andreae and Bandelt showed\footnote{See Theorem 2 in \cite{Andreae1995}.} that the application of their method to a minimal spanning tree (and an arbitrary edge) of a complete, weighted graph $(K_n,\omega)$ yields a Hamiltonian cycle $H_{AB}$ which satisfies
\begin{gather}\label{omegaHAB}
\omega(H_{AB}) \leqslant \frac{3\beta^2 + \beta}{2}\cdot \omega(H_{ideal}),
\end{gather}

\noindent
where $H_{ideal}$ is an ideal solution (a minimal Hamiltonian cycle) to the travelling salesperson problem. A couple years later, Andrea and Bandelt reanalysed their method and came up with a way of enhancing its performance.\footnote{For the original paper of Andreae and Bandelt see \cite{Andreae2001}.} For the most part, the \textit{refined Andreae-Bandelt algorithm} (or \textit{rAB algorithm} for short) follows the same steps as its ``basic'' counterpart. The only difference lies in the choice of vertices $y_i,$ so \textbf{Step 2.} is replaced with
\begin{description}
    \item[\textbf{(refined) Step 2.}] If $|V(T_i)|\geqslant 2$ pick $y_i\in E(T_i)$ such that $\{x_i,y_i\} \in E(T_i)$ and
    $$\omega(\{x_i,y_i\}) = \min\bigg\{ \omega(\{x_i,y\})\ :\ \{x_i,y\} \in E(T_i) \bigg\}.$$
    
    \noindent
    If $|V(T_i)| = 1$ put $y_i := x_i.$
\end{description}

\noindent
If $H_{rAB}$ denotes the Hamiltonian cycle constructed by the refined Andreae-Bandelt method, then it turns out\footnote{See Theorem 1 in \cite{Andreae2001}.} that the following estimate holds true:
\begin{gather}\label{omegaHrAB}
\omega(H_{rAB}) \leqslant \frac{\beta^2 + \beta}{2}\cdot \omega(H_{ideal}).
\end{gather}

\subsection{Path matching Christofides algorithm}

The idea of replacing the matching in the original Christofides algorithm by minimum-weight perfect path matching is due to B\"{o}ckenhauer et al. \cite{Bockenhauer2002}. This approach led to a  procedure known as the \textit{path matching Christofides algorithm} (or \textit{PMCh} algorithm for short), which was then slightly refined by Krug \cite{Krug2013} as the original version of the algorithm failed (in certain cases) to deliver a Hamiltonian cycle! The steps for this refined PMCh method are as follows:\\

\noindent
\textbf{Input:} A complete, weighted graph $G:= (K_n,\omega).$

\begin{description}
        \item[\textbf{Step 1.}] Find a minimal spanning tree $T$ of $G$.\footnote{We refer the Reader to \cite[Chapter 3.10]{Deo1974}, \cite[Chapter 12]{Papadimitriou1998} for the descriptions of Kruskal's and Prim's algorithm and \cite{Nesetril2001} for a thorough exposition of Boruvka's algorithm.}
    
        \item[\textbf{Step 2.}] Let $V_{odd}(T)$ be the set of all odd vertices (i.e., vertices with odd degrees) of $T$. Let $F$ be a weighted subgraph induced on $G$ by $V_{odd}(T)$.
    
        \item[\textbf{Step 3.}] Find a minimum-weight perfect path matching\footnote{The path matching $M$ is a family of paths in $G$ with disjoint endpoints which connect pairs of nodes from $F$. The minimum-weight path matching is a path matching with the least total weight. It is said to be perfect if every node in $F$ is an endpoint for one of the paths in $M$. Due to the minimality of this structure, the paths in $M$ form a forest and every pair of paths is edge-disjoint (see \cite[Claim 1]{Bockenhauer2002}).} in $F$ and denote it by $M$. 
        \begin{description}
            \item[\textbf{Step 3.1.}] Compute the shortest paths connecting vertices of $F$ in the original graph $G$.\footnote{Our implementation uses Floyd-Warshall method (see \cite{Floyd1962}) as it is well-suited for ``dense'' graphs.}
            
            \item[\textbf{Step 3.2.}] Construct a complete graph $F'$ on the vertices of $F$ with edge weights equal to the weights of the shortest paths computed in \textbf{Step 3.1}.
            
            \item[\textbf{Step 3.3}] Find a minimum-weight perfect matching $M$ in $F'$.\footnote{Such a matching exists because $F$ has even number of vertices due to the ``handshaking lemma'' (see Proposition 1.2.1 in \cite{Diestel2000}).}
            
            \item[\textbf{Step 3.4}] Let $\mathcal{P}$ be a family of paths ${P_i}$, where $P_i$ is the shortest path (computed in \textbf{Step 3.1}) connecting the endpoints of $i-$th edge from $M$. $\mathcal{P}$ is the sought minimum-weight perfect path matching.
        \end{description}
    
        \item[\textbf{Step 4.}] Resolve conflicts on $\mathcal{P}$ to obtain vertex-disjoint path matching $\mathcal{P}'$. This can be done by finding a path with only one conflict\footnote{``\textit{Conflict}'' is defined as a vertex belonging to more than one path in $\mathcal{P}$. As long as $\mathcal{P}$ is not vertex-disjoint, there always exists a path with exactly one conflict, since the graph composed of all paths in $\mathcal{P}$ is cycle-free.} in $\mathcal{P}$, then either bypassing the conflicting vertex if it is internal to this path or recombining it with the other conflicting path. \footnote{A detailed graphic description of this procedure can be found in \cite[Procedure 1, Fig. 4]{Bockenhauer2002} and in \cite[Algorithm 2]{Krug2013}.}
        
        \item[\textbf{Step 5.}] Construct an Eulerian walk $\eta$ on a multigraph obtained from combining $T$ with the paths from $\mathcal{P},$ which alternates between complete paths from $\mathcal{P}$ and the paths which are subgraphs of $T$. This Eulerian walk can be found in analogous way to the one presented in \textbf{Step 4} of the polygonal Christofides algorithm (see the next subsection) -- simply replace paths from $\mathcal{P}$ with single edges (connecting the endpoints of each path) and apply the enhanced Hierholzer procedure to such a multigraph. Let $\mathcal{Q}$ denote the set of all paths in $\eta$ which were constructed in this part of the procedure as the subpaths of $T$.

        \item[\textbf{Step 6.}] Transform $\mathcal{Q}$ to obtain a forest of degree at most $3$ as follows:
    
        \begin{description}
        \item[\textbf{Step 6.1}] Fix any root vertex $r\in T$. For every vertex $x$ in $T$, let $d_r(x)$ denote its node-distance to the selected root (it can be calculated easily by standard depth-first search).
        
        \item[\textbf{Step 6.2}] For every path $Q\in \mathcal{Q}$ let $x_Q$ be the vertex in $Q$ with the minimal value of $d_r$. If $x_Q$ is not an endpoint of $Q$ and its degree in $T$ exceeds $3$,\footnote{This condition imposed on the degree of $x_Q$ in $T$ is precisely the remedy which was introduced by Krug in \cite{Krug2013}.} redefine $Q$ by omiting the vertex $x_Q$.\footnote{Notice that $Q$ is still a path in $G$ (however, it no longer is a subgraph of $T$) which connects the same endpoints as previously.}
        \end{description}
        
        \item[\textbf{Step 7.}] Replace paths from $T$ which were used in $\eta$ with their refined versions obtained in \textbf{Step 6}. Remove the remaining conflicts as follows:
        \begin{description}
            \item[\textbf{Step 7.1}] Let $x\in \eta$ be any vertex appearing in $\eta$ twice.\footnote{A single vertex can appear in $\eta$ either once or twice.} If its neighbours in the initial state of $\eta$ are conflicts, bypass one of them, otherwise -- bypass any other duplicated vertex. 
            \item[\textbf{Step 7.2}] Repeat \textbf{Step 7.1} until no vertex appears in $\eta$ more than once.
        \end{description}
        \item[\textbf{Step 8.}] Return the refined $\eta$ from \textbf{Step 7.} as the Hamiltonian cycle $H_{PMCh}$. 
    \end{description}

It can be proved\footnote{See \cite[Theorem 2.1]{Krug2013} and \cite[Claim 8.]{Bockenhauer2002}.} that:

\begin{gather}\label{omegaPMCh}
\omega(H_{PMCh}) \leqslant \frac{3\beta^2}{2}\cdot \omega(H_{ideal}). 
\end{gather}

\subsection{Polygonal Christofides algorithm}

The final approximation algorithm we take into consideration is the \textit{polygonal Christofides algorithm} (or \textit{PCh algorithm} for short), which we introduced in our previous  work.\footnote{See \cite{Krukowski2021}.} Using the $\gamma-$polygon structure of the graph, we made necessary adjustments to the classical Christofides algorithm accounting for the fact that the graph need not be metric (i.e., $\gamma$ need not be equal to $1$):\\

\noindent
\textbf{Input:} A complete, weighted graph $G:=(K_n,\omega).$

\begin{description}
        \item[\textbf{Step 1.}] Find a minimal spanning tree $T$ of $G$.\footnote{We refer the Reader to \cite[Chapter 3.10]{Deo1974}, \cite[Chapter 12]{Papadimitriou1998} for descriptions of Kruskal's and Prim's algorithm and \cite{Nesetril2001} for a thorough exposition of Boruvka's algorithm.}
    
        \item[\textbf{Step 2.}] Let $V_{odd}(T)$ be the set of all odd vertices (i.e., vertices with odd degrees) of $T$. Let $F$ be a weighted subgraph induced on $G$ by $V_{odd}(T)$.
    
        \item[\textbf{Step 3.}] Find a minimum-weight perfect matching\footnote{Such matching exists because $F$ is a complete graph with even number of vertices. The latter observation is due to the ``handshaking lemma'' -- see Proposition 1.2.1 in \cite{Diestel2000}.} in $F$ and denote it by $M$. This can be done by applying the original blossom algorithm (due to Edmonds) or one of its subsequent versions.\footnote{The initial version of minimum-weight perfect matching algorithm \cite{Edmonds1965} had complexity of order $\mathcal{O} \left(|E|\cdot |V|^2\right)$. This bound has been consistently improved over the years -- see Tables I and II in \cite{Cook1999} for a detailed exposition.}
    
        \item[\textbf{Step 4.}] Use the following steps (from 4.1 to 4.3) to perform the enhanced Hierholzer algorithm and find an Eulerian walk\footnote{Just as Hamiltonian cycle is a cycle which visits every vertex exactly once, the \textit{Eulerian walk} is a walk which traverses through each edge of the graph exactly once. This walk can be found using either Fleury's algorithm or Hierholzer's algorithm (with time-complexities  $\mathcal{O}(|E|^2)$ and $\mathcal{O}(|E|),$ respectively). These algorithms can be found as X.2 and X.4 in \cite[Chapter 10]{Fleischner1991}, respectively.} $W$ in the multigraph $\GG = (V(T), \EE),$ where 
        $$\EE := \bigg\{(0,e) \ :\ e\in E(T)\bigg\} \cup \bigg\{(1,e) \ :\ e\in E(M)\bigg\}.$$
        
        \begin{description}
            \item[\textbf{Step 4.1}] Select an arbitrary starting vertex $x_1\in V(\mathbb{G})$, which is incident to an edge in $E(M)$. Let $W:=(y_1)$, where $y_1:=x_1$. Mark $y_1$ as a ``\textit{recently visited vertex}''. 
            \item[\textbf{Step 4.2}] Extend the sequence of vertices $W$ in the following way: 
            \begin{description}
                \item[(a)] select any neighbour $y$ of the ``\textit{recently visited vertex}'' connected to it by an ``\textit{unused}'' edge in $E(M)$ (if possible) or in $E(\mathbb{G})$ (this is always possible because each vertex in $V(\mathbb{G})$ has even degree).\footnote{Due to this ``prioritization'' $\{y_1,y\}$ is guaranteed to be taken from $E(M)$.}
                \item[(b)] add $y$ to the sequence $W$. Mark $y$ as the ``\textit{recently visited vertex}'' and denote the traversed edge as ``\textit{used}''. If there is an unused edge incident to $y$, go back to step a).\footnote{At each step of the algorithm the only vertices with odd number of unused edges are $x_1$ and the current vertex $y$. Therefore, the loop can terminate only if we return to the initial vertex $x_1$.}
            \end{description}
            \item[\textbf{Step 4.3}] If there are any unused edges after this process, start at any vertex $x\in W$ which has at least one neighbour not in $W$. Repeat the procedure described in Step 4.2 obtaining a closed walk $W_x.$ Replace the last appearance of $x$ in sequence $W$ with $W_x$.
        \end{description}
        \item[\textbf{Step 5.}] Use the following steps (5.1 and 5.2) to perform the enhanced shortcutting procedure and obtain a Hamiltonian cycle $H_{PCh} = (y_1,y_2,\ldots,y_n,y_{n+1})$ on $G$.
        
        \begin{description}
            \item[\textbf{Step 5.1}] Put $y_1:=x_1$ and $y_2:=x_2$. From the construction (in Step 4.) of the Eulerian walk $W$ it follows that $\{x_1,x_{2}\}\in E(M)$. Let $i:=3$ and $j:=3$.
            \item[\textbf{Step 5.2}] While $i\leqslant n$ perform the following steps:
                \begin{description}
                    \item[(a)] If $x_j\notin V(M)$ and it has already appeared in $H_{PCh}$, increment $j$. 
            
                    \item[(b)] If $x_j\notin V(M)$ and it has not yet appeared in $H_{PCh}$, put $y_i:=x_j$ and increment both $i$ and $j$.
            
                    \item[(c)] If $x_j\in V(M)$, $\{x_j,x_{j+1}\}\notin E(M)$ and $\{x_{j-1},x_j\}\notin E(M)$, increment $j$. 
            
                    \item[(d)] Otherwise, i.e., in the situation where $x_j\in V(M)$ and either $\{x_j,x_{j+1}\}\in E(M)$ or $\{x_{j-1},x_j\}\in E(M)$, let $y_i:=x_j$ and increment both $i$ and $j$.
                \end{description}
        \end{description}
\end{description}

The climax of our previous paper was the proof that the PCh algorithm produces a Hamiltonian cycle $H_{PCh},$ which satisfies the following estimate:\footnote{See Theorem 8 in \cite{Krukowski2021}.}
\begin{gather}\label{omegaHpolygon}
\omega(H_{PCh}) \leqslant \frac{3\gamma}{2}\cdot \omega(H_{ideal}).
\end{gather}

As a closing remark of this section we may compare the estimate \eqref{omegaHpolygon} with those of the previous algorithms (see \eqref{omegaHDMST}, \eqref{omegaHAB}, \eqref{omegaHrAB}, \eqref{omegaPMCh}) and arrive at the conclusion that the PCh algorithm flaunts the best worst-case behaviour of all the approximation methods (whenever $\gamma\in [\beta,2\beta)$ and $\beta \geqslant 3$). Next section is devoted to supporting this claim on the basis of numerical simulations.

\section{Numerical comparison of the approximation methods}
\label{section:numericalcomparison}

In the previous section we reviewed a number of algorithms generating approximate solutions to the travelling salesperson problem. Apart from the methods themselves, we have provided estimates \eqref{omegaHDMST}, \eqref{omegaHAB}, \eqref{omegaHrAB}, \eqref{omegaPMCh} and \eqref{omegaHpolygon}, which tell us how far a total weight of an approximate solution can deviate from the total weight of an ideal Hamiltonian cycle (i.e., a ``true'' solution to the TSP). For instance, \eqref{omegaHDMST} guarantees that the weight of the approximate solution generated by the DMST method cannot exceed $2\gamma$ times the weight of the ideal Hamiltonian cycle, whereas the application of the PCh algorithm reduces this constant to $\frac{3\gamma}{2}.$ These estimates provide valueable insights into worst-case scenarios of the algorithms' performance but this theoretical deliberations are far from being the sole way of measuring the quality of the presented methods. In the current section, instead of focusing on the worst-case scenarios, we concentrate on examples and numerical simulations, which test how the algorithms fare in ``real life''. 

We commence with a concrete example of a weighted $K_7$ graph, whose nodes are labeled ``$0$'', ``$1$'', $\ldots$, ``$6$'' (see Fig. \ref{fig:exampleofK7}). Browsing through every Hamiltonian cycle (there are 360 of them) or running the Held-Karp algorithm we discover the solution $(0,1,2,4,5,3,6)$ to the TSP with the total weight of $2.07$ (see Fig. \ref{fig:K7_and_solution_to_TSP}). The following table presents the performance of approximation algorithms reviewed in the previous section:

\begin{center}
\begin{tabular}{ | c | c | c |}
    \hline
     Algorithm & Hamiltonian cycle & Total weight \\ \hline
     DMST & $(0,1,2,6,3,5,4)$ & 2.22 \\ \hline
     AB & $(0,2,1,3,5,6,4)$ & 3.29 \\ \hline
     rAB & $(0,1,2,6,5,3,4)$ & 3.08 \\ \hline
     PMCh & $(0,1,2,6,3,5,4)$ & 2.22 \\ \hline
     PCh & $(0,4,2,1,6,3,5)$ & 2.18 \\ \hline
\end{tabular}
\end{center}

The cycles obtained by each of the algorithms are presented in the following figures:

\begin{figure}[H]
        \hspace{-0.075\textwidth}\includegraphics[width=1.15\textwidth]{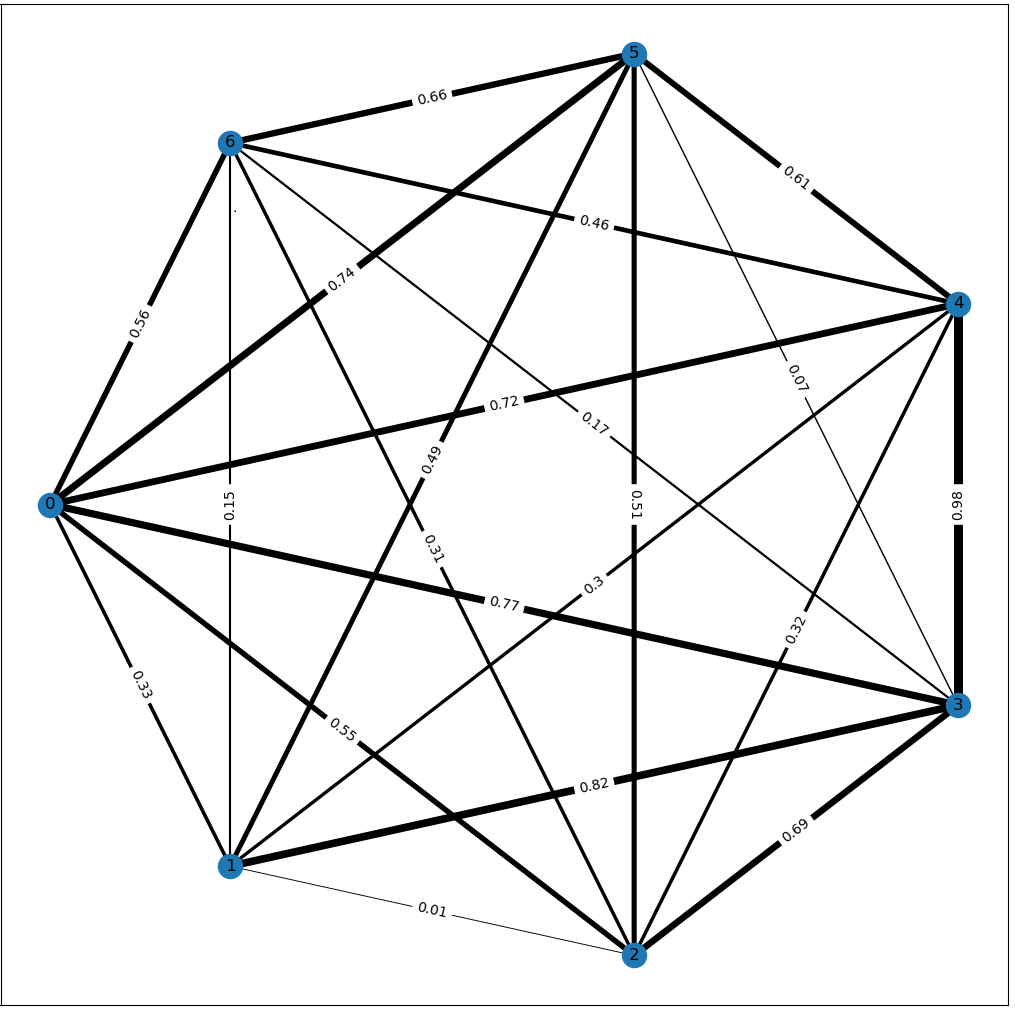}
				\vspace{-.5cm}
        \caption{Example of a weighted $K_7$ graph.}
        \label{fig:exampleofK7}
\end{figure}

\begin{figure}[H]
        \hspace{-0.075\textwidth}\includegraphics[width=1.15\textwidth]{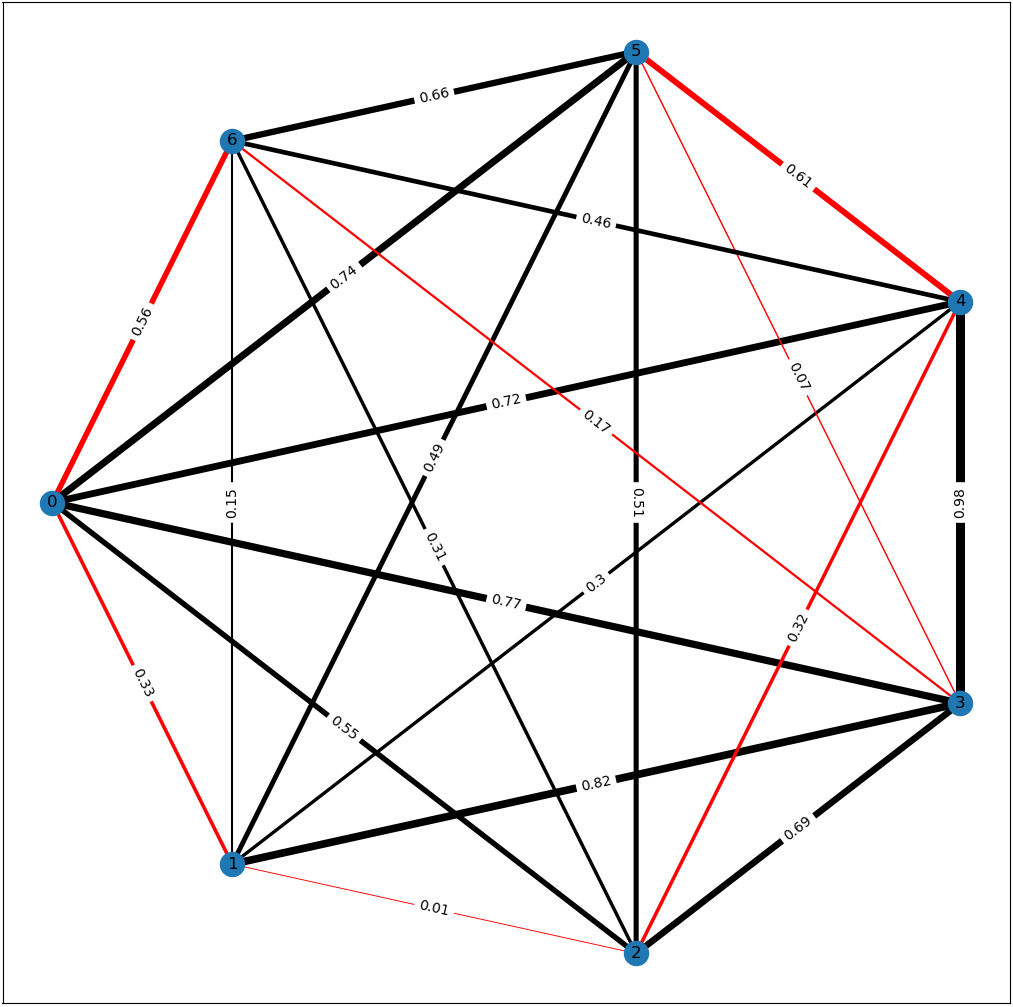}
				\vspace{-.5cm}
        \caption{Ideal solution to the TSP (in red).}
        \label{fig:K7_and_solution_to_TSP}
\end{figure}

		\begin{figure}[H]
        \hspace{-0.075\textwidth}\includegraphics[width=1.15\textwidth]{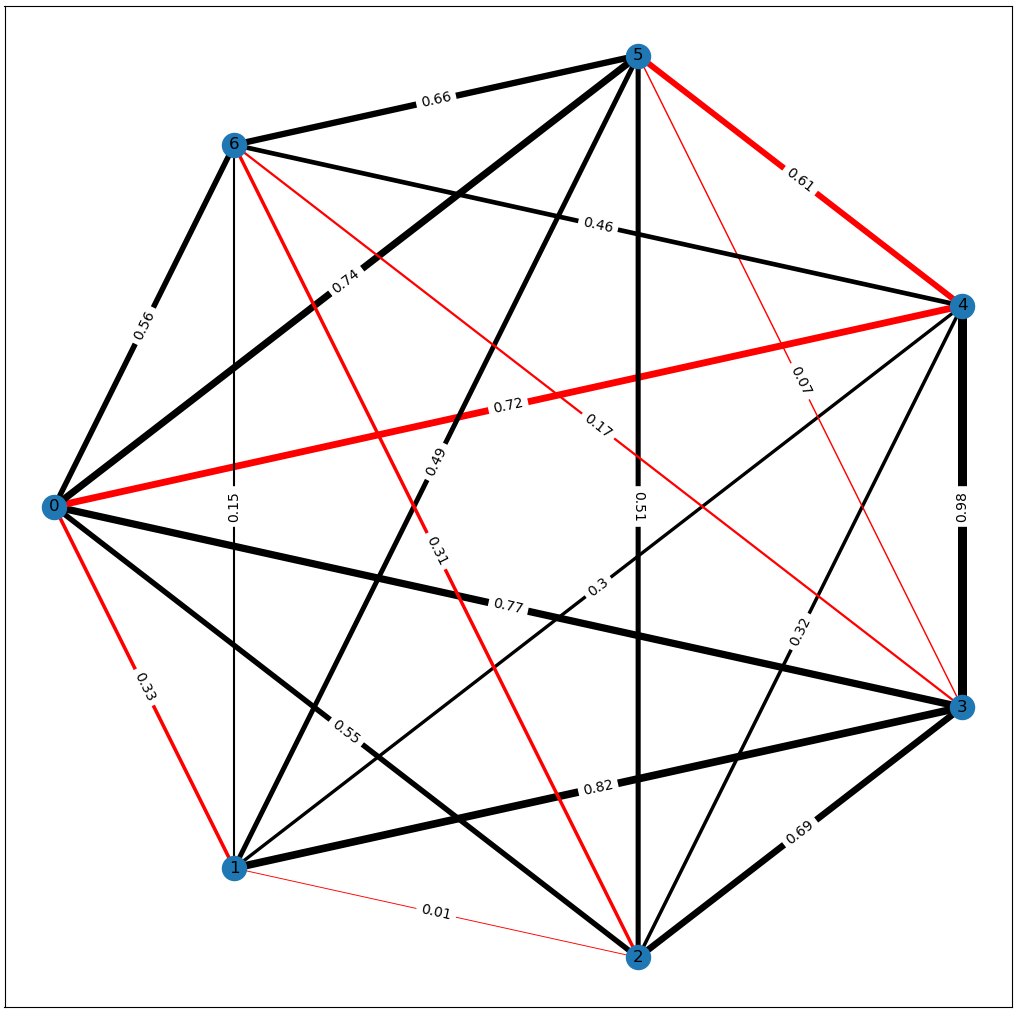}
				\vspace{-.5cm}
        \caption{Approximate solution generated by the DMST or the PMCh algorithm (in red).}
        \label{fig:K7_and_DMST}
    \end{figure}

    \begin{figure}[H]
        \hspace{-0.075\textwidth}\includegraphics[width=1.15\textwidth]{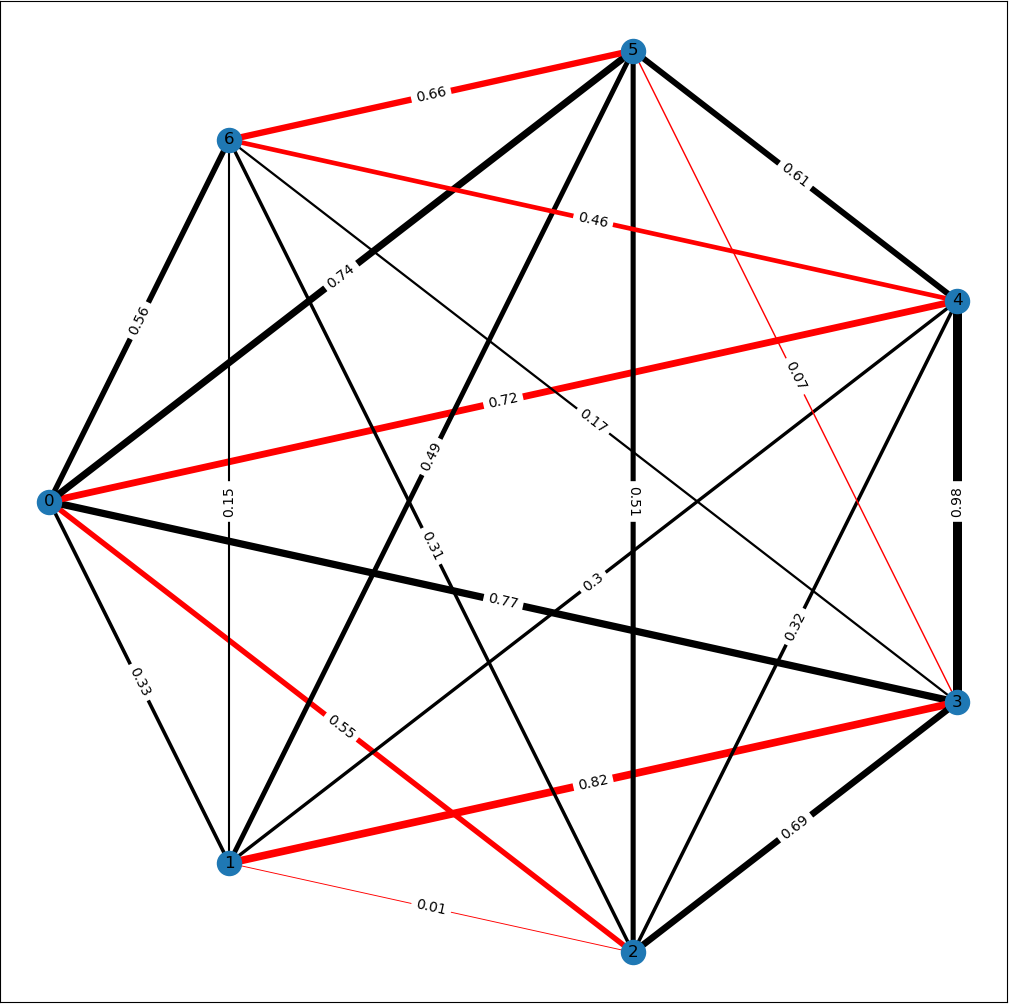}
				\vspace{-.5cm}
        \caption{Approximate solution generated by the AB algorithm (in red).}
        \label{fig:K7_and_A}
    \end{figure}
    
		\begin{figure}[H]
        \hspace{-0.075\textwidth}\includegraphics[width=1.15\textwidth]{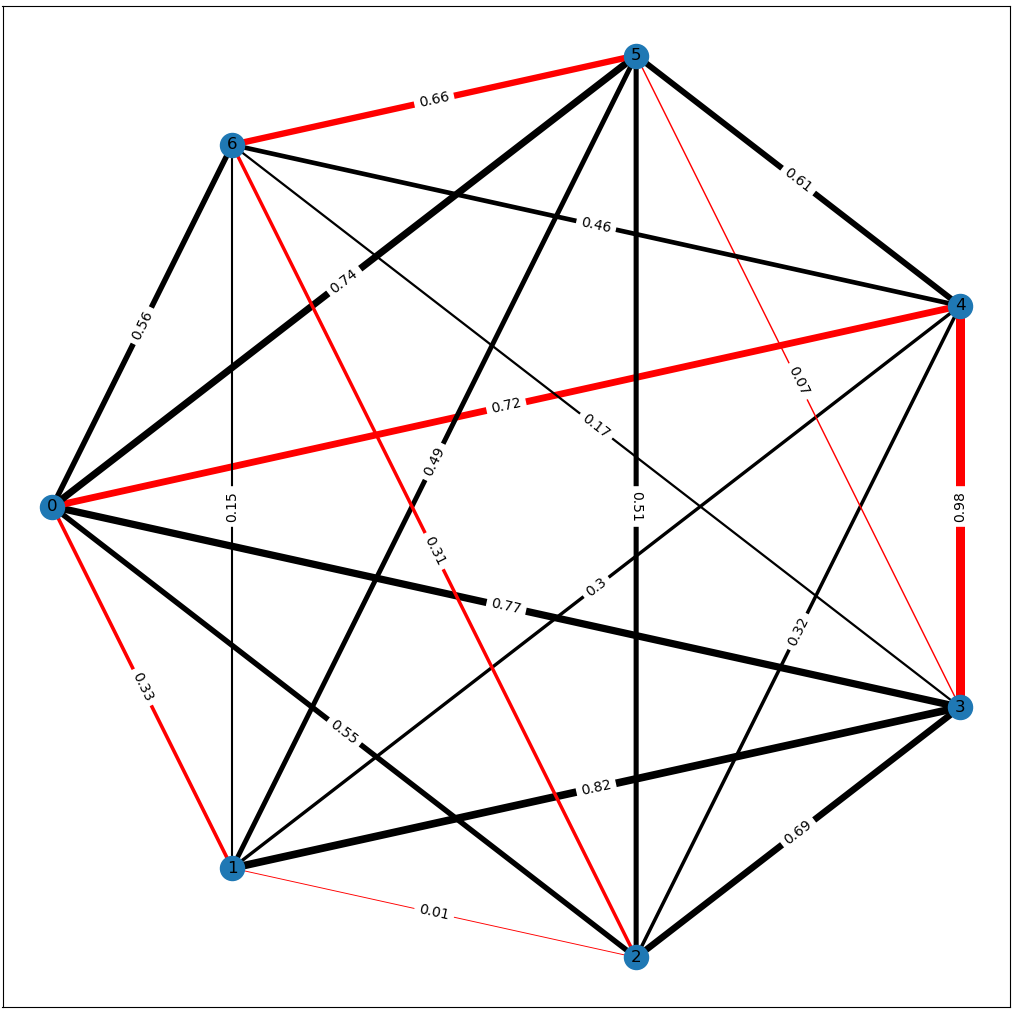}
				\vspace{-.5cm}
        \caption{Approximate solution generated by the rAB algorithm (in red).}
        \label{fig:K7_and_rA}
    \end{figure}
    
		\begin{figure}[H]
        \hspace{-0.075\textwidth}\includegraphics[width=1.15\textwidth]{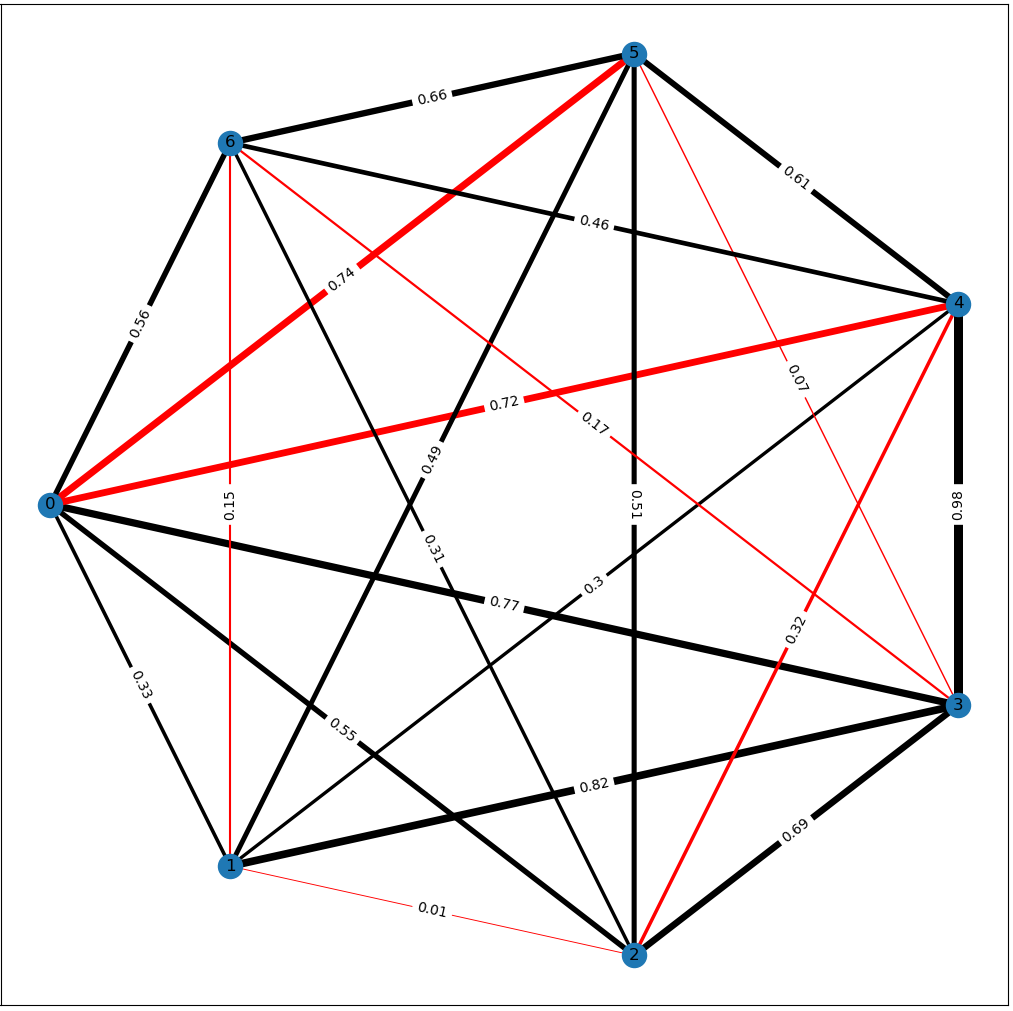}
				\vspace{-.5cm}
        \caption{Approximate solution generated by the PCh algorithm (in red).}
        \label{fig:K7_and_KT}
    \end{figure}

Although this preliminary instance seems promising, we should not jump to any conclusions on the basis of a solitary example. In order to avoid accusations that the $K_7$ example given above was a ``fluke'', we have devised the following experiment: we randomize 45 graphs with 75 nodes and run all 5 algorithms to find the approximate solutions on these graphs. The results are enclosed below:

\vspace{-.1cm}
\begin{figure}[H]
    \centering
    \includegraphics[width=0.9\textwidth]{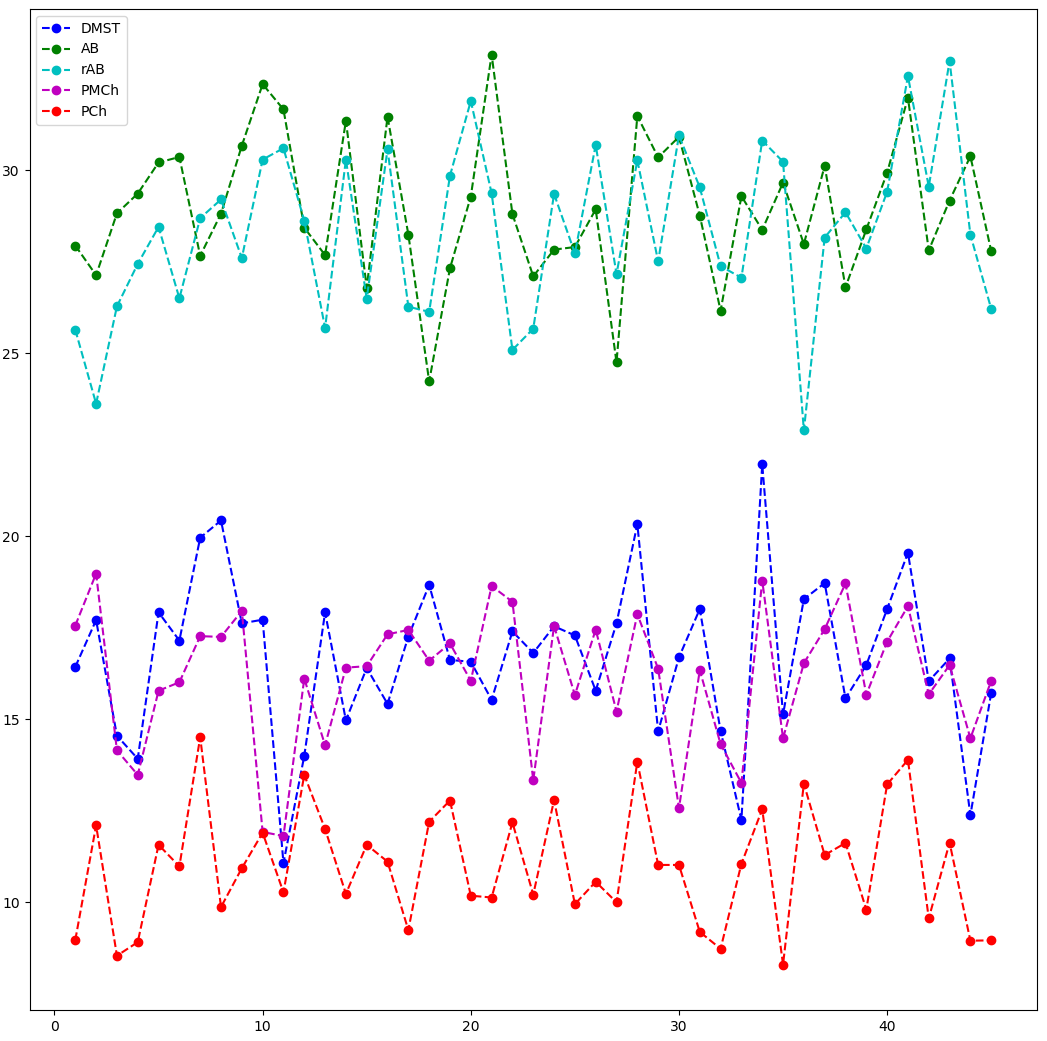}
		\vspace{-.5cm}
    \caption{Approximate solutions to the TSP on 45 randomized graphs with 75 nodes. X-axis represents the number of the ``test'', Y-axis represents the total weight of the Hamiltonian cycle.}
    \label{fig:multiplot75}
\end{figure}

We can distinguish 3 ``layers'' in this plot. The first one spans roughly from 25 to 35 -- this interval contains most of the approximate solutions generated by AB and rAB algorithms. The second ``layer'' is from 13 or 14 to 20 and contains the bulk of approximate solutions returned by the DMST and PMCh methods. The third and final layer consists of a single (red) plot representing the approximate solutions produced by the PCh algorithm. It is clear that these approximate solutions are better (i.e., have lower total weight) than the ones generated by all other methods. 

In order to confirm the conclusions drawn from the first experiment, we have run it again, increasing the size of the graphs to 100 and then to 125 nodes. As seen in the figure below, the dominance of PCh algorithm over all other methods remains unquestioned.

\vspace{-.1cm}
\begin{figure}[H]
    \centering
        \includegraphics[width=0.9\textwidth]{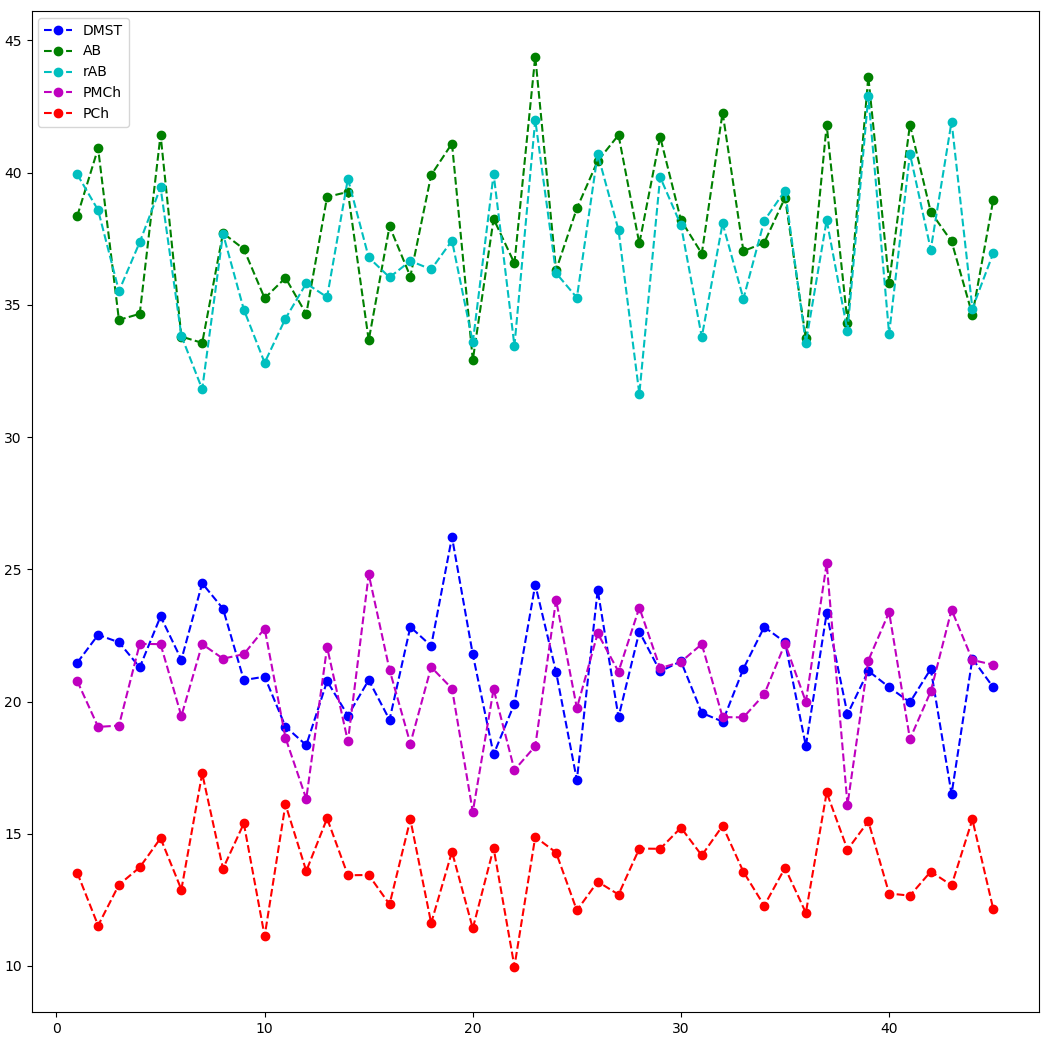}
        %\caption{Example of a weighted $K_7$ graph.}
        \label{fig:multiplo100}
				\vspace{-.5cm}
				 \caption{Approximate solutions to the TSP on 45 randomized graphs with 100 nodes. X-axis represents the number of the ``test'', Y-axis represents the total weight of the Hamiltonian cycle.}
\end{figure}
 
\begin{figure}[H]
        \centering
        \includegraphics[width=0.9\textwidth]{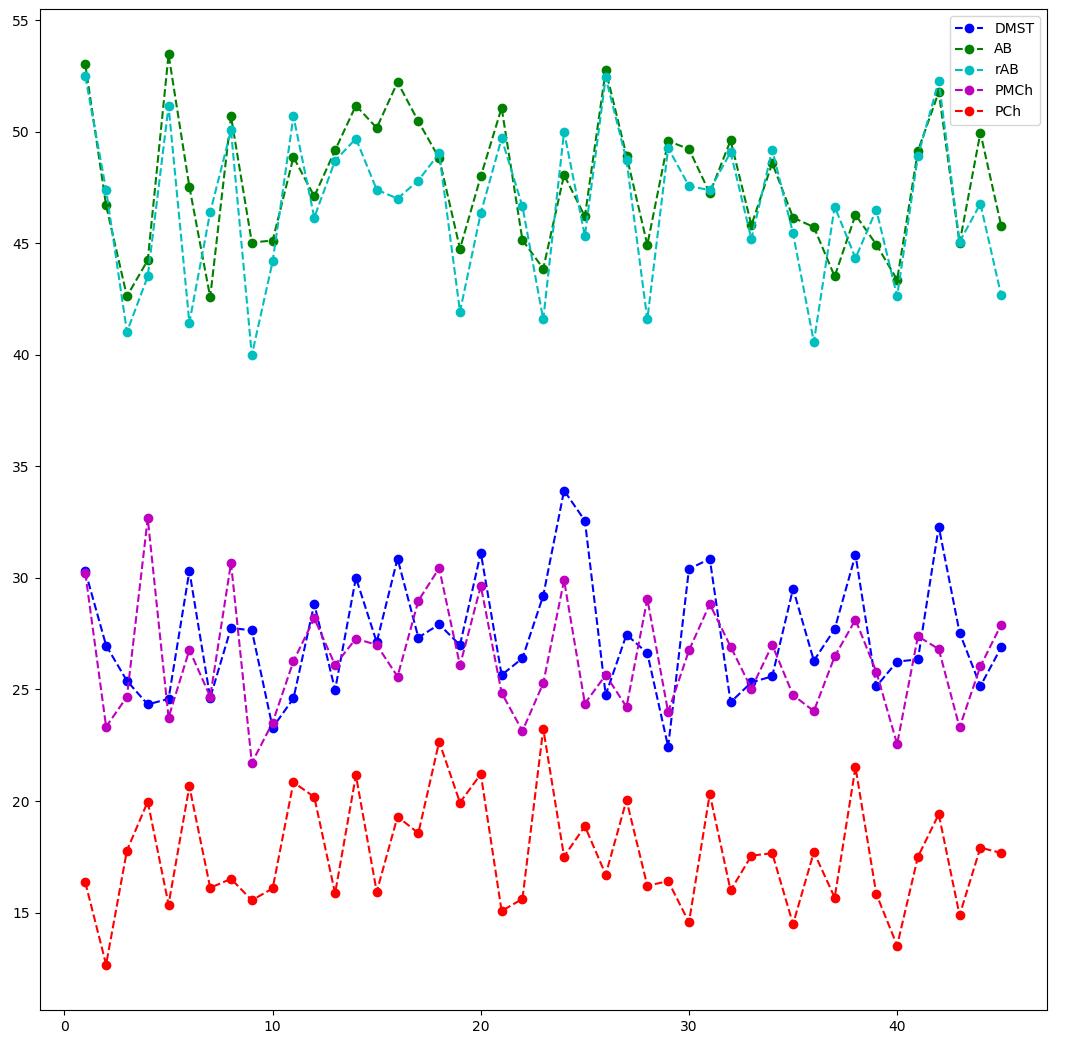}
        %\caption{Ideal (i.e., not approximate) solution to the TSP (in red).}
        \label{fig:multiplot125}
				\vspace{-.5cm}
    \caption{Approximate solutions to the TSP on 45 randomized graphs with 125 nodes. X-axis represents the number of the ``test'', Y-axis represents the total weight of the Hamiltonian cycle.}
\end{figure}

At this point we should be fairly convinced that the PCh algorithm returns better approximate solutions for larger graphs than the rest of the discussed methods. One last doubt that should be dispelled is that of time complexity. After all, the Held-Karp algorithm returns the ideal solution to the TSP, but as we have remarked earlier, its exponential time complexity makes it impractical. In theory, the PCh method should not suffer from this cardinal flaw since its time complexity equals $\mathcal{O}(n^3)$ (see Theorem 9 in \cite{Krukowski2021}). Let us verify this claim with the following numerical simulation: we randomize a 100 graphs of size $n$ for every $n = 5,\ldots, 100$ and on every chunk of these 100 graphs we run all 5 algorithms and compute the average time of execution. The results are illustrated in the picture below:

\vspace{-0.1cm}
\begin{figure}[H]
    \centering
    \includegraphics[width=0.9\textwidth]{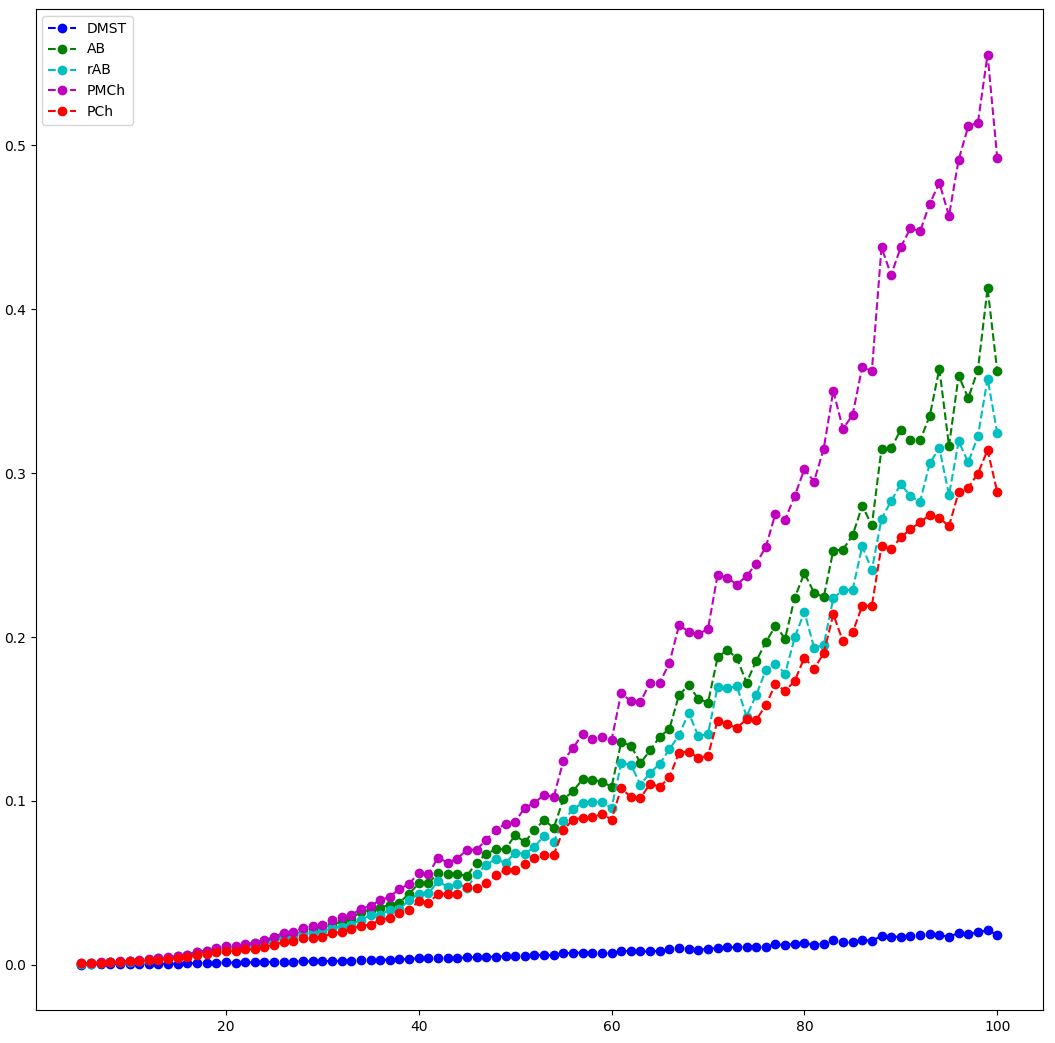}
    \caption{Average time of execution of approximate algorithms on chunks of 100 randomized graphs. X-axis represents the number of the chunk and the size of graphs at the same time. Y-axis represents time in seconds.}
    \label{fig:timecomplexity}
\end{figure}

One thing that is impossible to overlook in this plot is the excellent time-efficiency of the DMST method. This complies with the fact that in theory this algorithm has the time complexity of $\mathcal{O}(n^2\log(n))$. The next thing that catches one's eye is the fact that the PCh algorithm is more time efficient than the PMCh and both variants of the Andreae-Bandelt method. 

To conclude, we have strong evidence that our PCh method is one of the best approximation algorithms for solving the TSP. It produces Hamiltonian cycles of lower weights than the rest of the methods. Furthermore, its average execution time is comparable with those of other algorithms (bar the DMST method). It is our firm belief that these features position the PCh method as one of the best approximation algorithms for solvng TSP currently known in the literature.


\begin{thebibliography}{abcd}
	\bibitem{An2015} An, T. V., Tuyen, L. Q., Dung, N. V.: \textit{Stone-type theorem on $b$-metric spaces and applications}. Topol. Appl. \textbf{185/186}, p. 50-64 (2015) \href{https://doi.org/10.1016/j.topol.2015.02.005}{https://doi.org/10.1016/j.topol.2015.02.005}
	
	\bibitem{Anderson2018} Anderson, M., Gross, J. L., Yellen, J.: \textit{Graph Theory and Its Applications, Third Edition.} Chapman and Hall/CRC (2018)
	\href{https://doi.org/10.1201/9780429425134}{https://doi.org/10.1201/9780429425134}

    \bibitem{Andreae1995} Andreae, T., Bandelt, H.-J.: \textit{Performance guarantees for approximation algorithms depending on parametrized triangle inequalities}. SIAM J. Discrete Math. \textbf{8} (1), p. 1-16 (1995) \href{https://doi.org/10.1137/S0895480192240226}{https://doi.org/10.1137/S0895480192240226}

    \bibitem{Andreae2001} Andreae, T.: \textit{On the traveling salesman problem restricted to inputs satisfying a relaxed triangle inequality}. Networks \textbf{38} (2), p. 59-67 (2001) \href{https://doi.org/10.1002/net.1024}{https://doi.org/10.1002/net.1024}


    \bibitem{Applegate2006} Applegate, D. L., Bixby, R. E., Chvatal, V., Cook, W. J.: \textit{The traveling salesman problem: a computational study}. Princeton University Press, Princeton (2007) \href{https://doi.org/10.1515/9781400841103}{https://doi.org/10.1515/9781400841103}

	\bibitem{Bakthin1989} Bakthin, I. A.: \textit{The contraction mapping principle in almost metric spaces}.  Func. An., Ul'yanovsk. Gos. Ped. Inst. \textbf{30}, p. 26-37 (1989)

    \bibitem{Bandelt1991} Bandelt, H.-J., Crama, Y., Spieksma, F. C. R.: \textit{Approximation algorithms for multi-dimensional assignment problems with decomposable costs}. Discrete Appl. Math. \textbf{49} (1-3), p. 25-50 (1994)
    \href{https://doi.org/10.1016/0166-218X(94)90199-6}{https://doi.org/10.1016/0166-218X(94)90199-6}

    \bibitem{Bockenhauer2002}  B\"{o}ckenhauer, H.-J., Hromkovi\v{c}, J., Klasing, R., Seibert, S., Unger, W.: \textit{Towards the notion of stability of approximation for hard optimization tasks and the traveling salesman problem.} Theor. Comput. Sci. \textbf{285}, p. 3-24 (2002) \href{https://doi.org/10.1016/S0304-3975(01)00287-0}{https://doi.org/10.1016/S0304-3975(01)00287-0}
    
	\bibitem{Bondy2008} Bondy, A., Murty, M.R.: \textit{Graph Theory}, Graduate Texts in Mathematics 244, Springer-Verlag, London (2008)

	\bibitem{Bourbaki1966} Bourbaki, N.: \textit{Elements of Mathematics. General Topology Part 2.} Addison-Wesley Publishing Co., Paris (1966)

    %\bibitem{Christofides1976} Christofides, N.: \textit{Worst-case analysis of a new heuristic for the traveling salesman problem.} Technical Report 388, Graduate School of Industrial Administration, Carnegie-Mellon University, Pittsburgh (1976)

	\bibitem{Chrzaszcz2018} Chrz\k{a}szcz, K., Jachymski J., Turobo\'s, F.: \textit{On characterizations and topology of regular semimetric spaces}. Publ. Math. Debr. \textbf{93}, p. 87-105 (2018)
	\href{https://doi.org/10.5486/pmd.2018.8049}{https://doi.org/10.5486/pmd.2018.8049}
	
	\bibitem{Chrzaszcz2018.2} Chrz\k{a}szcz, K., Jachymski J., Turobo\'s F.: \textit{Two refinements of Frink's metrization theorem and fixed point results for Lipschitzian mappings on quasimetric spaces}. Aequationes Math. \textbf{93}, p. 277–297 (2019)
\href{https://doi.org/10.1007/s00010-018-0597-9}{https://doi.org/10.1007/s00010-018-0597-9}

    \bibitem{Cook1999} Cook, W., Rohe, A.: \textit{Computing minimum-weight perfect matchings.} INFORMS J. Comput. \textbf{11} (2), p. 138-148 (1999)
    \href{https://doi.org/10.1287/ijoc.11.2.138}{https://doi.org/10.1287/ijoc.11.2.138}

    \bibitem{Cook2012} Cook, W.: \textit{In Pursuit of the Traveling Salesman: Mathematics at the Limits of Computation.} Princeton University Press, Princeton (2012)
    \href{https://doi.org/10.1515/9781400839599}{https://doi.org/10.1515/9781400839599}

    \bibitem{Cormen2009} Cormen, T. H., Leiserson, C. E., Rivest, R. L., Stein, C.:  \textit{Introduction to Algorithms (3rd ed.).} MIT Press, London (2009)

	\bibitem{Danzig1959} Dantzig, G. B., Fulkerson, D. R., Johnson, S. M.: \textit{On a Linear-Programming, Combinatorial Approach to the Traveling-Salesman Problem.} Oper. Res. \textbf{7} (1), p. 58-66 (1959)
	\href{https://doi.org/10.1287/opre.7.1.58}{https://doi.org/10.1287/opre.7.1.58}

	\bibitem{Diestel2000} Diestel, R.: \textit{Graph theory}. Springer-Verlag, New York (2000)

	\bibitem{Dung2016} Dung, N. V., Hang, V. T. L.: \textit{On relaxations of contraction constants and Caristi's theorem in $b$-metric spaces.} J. Fixed Point Theory Appl. \textbf{18}, p. 267-284 (2016)
	\href{https://doi.org/10.1007/s11784-015-0273-9}{https://doi.org/10.1007/s11784-015-0273-9}

    \bibitem{Edmonds1965} Edmonds, J.: \textit{Paths, trees, and flowers.} Canadian J. Math. \textbf{17}, p. 449-467 (1965) 
    \href{https://doi.org/10.4153/CJM-1965-045-4}{https://doi.org/10.4153/CJM-1965-045-4}
	
	\bibitem{Fagin2003} Fagin, R., Kumar, R., Sivakumar, D.: \textit{Comparing top k lists.} SIAM J. Discrete Math. \textbf{17}, p. 134-160 (2003) \href{https://doi.org/10.1137/S0895480102412856}{https://doi.org/10.1137/S0895480102412856}

    \bibitem{Fleischmann1988} Fleischmann, B.:  \textit{A new class of cutting planes for the symmetric travelling salesman problem}. Math. Program. \textbf{40}, p. 225-246 (1988) 
    \href{https://doi.org/10.1007/BF01580734}{https://doi.org/10.1007/BF01580734}

	\bibitem{Fleischner1991} Fleischner, H.: \textit{Eulerian Graphs and Related Topics: Part 1, Volume 2.} Annals of Discrete Mathematics 50, Elsevier (1991)

	\bibitem{Fletcher1991} Fletcher, P., Hoyle, H., Patty, C. W.: \textit{Foundations of Discrete Mathematics}. PWS-Kent Pub. Co., Boston (1991)
    
    \bibitem{Floyd1962} Floyd, R. W.: \textit{Algorithm 97: Shortest path.} Commun. ACM \textbf{5} (6), p. 345 (1962)
    \href{https://doi.org/10.1145/367766.368168}{https://doi.org/10.1145/367766.368168}
    
    \bibitem{Fonlupt1992} Fonlupt, J., Naddef, D.: \textit{The traveling salesman problem in graphs with some excluded minors}. Math. Program. \textbf{53}, p. 147-172 (1992)
    \href{https://doi.org/10.1007/BF01585700}{https://doi.org/10.1007/BF01585700}
    
	\bibitem{Gutin2001} Gutin, G.,  Yeo, A.: \textit{TSP tour domination and Hamilton cycle decompositions of regular digraphs}. Operations Research Letters \textbf{28} (3), p. 107-111 (2001)
	\href{https://doi.org/10.1016/S0167-6377(01)00053-0}{https://doi.org/10.1016/S0167-6377(01)00053-0}
	
	\bibitem{Heap1963} Heap, B. R. : \textit{Permutations by Interchanges}, Comput. J. \textbf{6} (3), p. 293–4  (1963)
	
	\bibitem{Held1971} Held M., Karp R. M. : \textit{The traveling-salesman problem and minimum spanning trees: Part II}, Math. Program. \textbf{1}, p. 6-25 (1971)

    \bibitem{Jachymski2020} Jachymski, J., Turobo\'s, F.: \textit{On functions preserving regular semimetrics and quasimetrics satisfying the relaxed polygonal inequality}. Rev. R. Acad. Cienc. Exactas F\`is. Nat. Ser. A Mat. RACSAM \textbf{114} (159), p. 1-11 (2020)
	\href{https://doi.org/10.1007/s13398-020-00891-7}{https://doi.org/10.1007/s13398-020-00891-7}
	
	\bibitem{Krug2013} Krug, S.: \textit{Analysis of a near-metric TSP approximation algorithm.} RAIRO-Theor. Inf. Appl. \textbf{47} (3), p. 293-314 (2013) 
	\href{https://doi.org/10.1051/ita/2013040}{https://doi.org/10.1051/ita/2013040}
	
	\bibitem{Krukowski2021} Krukowski, M., Turobo\'s F.: \textit{Approximate solutions to the Travelling Salesperson Problem on semimetric graphs}, arXiv: 2105.07275 (submitted to review)

	\bibitem{Laporte1992} Laporte, G.: \textit{The traveling salesman problem: An overview of exact and approximate algorithms}. Eur. J. Oper. Res. \textbf{59} (2), p. 231-247 (1992)
	\href{https://doi.org/10.1016/0377-2217(92)90138-Y}{https://doi.org/10.1016/0377-2217(92)90138-Y}

    \bibitem{Matai2010} Matai, R., Singh, S., Mittal, M. L.: \textit{Traveling Salesman Problem: an Overview of Applications, Formulations, and Solution Approaches, Traveling Salesman Problem, Theory and Applications}. Donald Davendra, IntechOpen (2010)
	\href{https://doi.org/10.5772/12909}{https://doi.org/10.5772/12909}

	\bibitem{Miller1960} Miller, C. E., Tucker, A. W., Zemlin, R. A.: \textit{Integer Programming Formulation of Traveling Salesman Problems}. J. ACM \textbf{7} (4), p. 326-329 (1960)
	\href{https://doi.org/10.1145/321043.321046}{https://doi.org/10.1145/321043.321046}

    \bibitem{Deo1974} Narsingh, D.: \textit{Graph theory with applications to engineering and computer science.} New Delhi, Prentice-Hall (1974)
    
    \bibitem{Nesetril2001} Ne\v{s}et\v{r}il, J., Milkov\'{a}, E., Ne\v{s}et\v{r}ilov\'{a}, H.: \textit{Otakar Bor\r{u}vka on minimum spanning tree problem Translation of both the 1926 papers, comments, history}. Discrete Math. \textbf{233} (1-3), p. 3-36 (2001)
    \href{https://doi.org/10.1016/s0012-365x(00)00224-7}{https://doi.org/10.1016/s0012-365x(00)00224-7}

	\bibitem{Paluszynski2009} Paluszy\'nski, M., Stempak, K.: \textit{On quasi-metric and metric spaces.} Proc. Amer. Math. Soc. \textbf{137}, p. 4307-4312 (2009)
	\href{https://doi.org/10.1090/S0002-9939-09-10058-8}{https://doi.org/10.1090/S0002-9939-09-10058-8}
		
	\bibitem{Papadimitriou1998} Papadimitriou, C. H., Steiglitz, K.: \textit{Combinatorial optimization: algorithms and complexity}. Courier Corporation (1998)

	\bibitem{Reinelt1994} Reinelt, G.: \textit{The Traveling Salesman -- Computational Solutions for TSP Applications.} Lecture Notes in Computer Science 840, Springer-Verlag, Berlin (1994)
	\href{https://doi.org/10.1007/3-540-48661-5}{https://doi.org/10.1007/3-540-48661-5}
	
	\bibitem{Schroeder2006} Schroeder, V.: \textit{Quasi-metric and metric spaces.} Conform. Geom. Dyn. \textbf{10}, p. 355-360 (2006)
	\href{https://doi.org/10.1090/S1088-4173-06-00155-X}{https://doi.org/10.1090/S1088-4173-06-00155-X}
	
	\bibitem{Sedgewick1977} Sedgewick, R. : \textit{Permutation Generation Methods.} ACM Computing Surveys \textbf{9} (2), p. 137–164 (1977)

	\bibitem{Snyder2019} Snyder, L. V., Shen, Z. J. M.:
    \textit{Traveling Salesman Problem}. In: \textit{Fundamentals of Supply Chain Theory},  John Wiley \& Sons, p. 403-461 (2019)
    \href{ https://doi.org/10.1002/9781119584445.ch10}{ https://doi.org/10.1002/9781119584445.ch10}
    
    \bibitem{Suzuki2017} Suzuki, T.: \textit{Basic inequality on $b-$metric space and its applications.} J. Inequal. Appl. \textbf{256}, p. 1-11 (2017)
    \href{https://doi.org/10.1186/s13660-017-1528-3}{https://doi.org/10.1186/s13660-017-1528-3}
    
    \bibitem{Van2020} 
    Van Bevern, R., Slugina, V. A.: \textit{A historical note on the $\frac{3}{2}$-approximation algorithm for the metric traveling salesman problem}. Hist. Math. \textbf{53}, p. 118-127 (2020) \href{https://doi.org/10.1016/j.hm.2020.04.003}{https://doi.org/10.1016/j.hm.2020.04.003}
    
    \bibitem{Wilson1931} Wilson, W. A.:  \textit{On semi-metric spaces.} Amer. J. Math. \textbf{53}, p. 361-373 (1931)
	\href{https://doi.org/10.2307/2370790}{https://doi.org/10.2307/2370790}
	
	\bibitem{Xia2009} Xia, Q.: \textit{The Geodesic Problem in Quasimetric Spaces.} J. Geom. Anal. \textbf{19}, p. 452-479 (2009)
	\href{https://doi.org/10.1007/s12220-008-9065-4}{https://doi.org/10.1007/s12220-008-9065-4}
\end{thebibliography}
\end{document}